\long\def\blue#1{\textcolor {black}{#1}}
\newcommand{\C}{\mathbb{C}}
\newcommand{\Q}{\mathbb{Q}}
\newcommand{\Z}{\mathbb{Z}}
\newcommand{\F}{\mathbb{F}}
\newcommand{\Mat}{\mathrm{Mat}}
\newcommand{\Aut}{\mathrm{Aut}}
\newcommand{\End}{\mathrm{End}}
\newcommand{\Id}{\mathrm{Id}}
\newcommand{\EL}{\mathrm{EL}}
\newcommand{\SL}{\mathrm{SL}}
\newcommand{\GL}{\mathrm{GL}}
\newcommand{\GE}{\mathrm{GE}}
\newcommand{\St}{\mathrm{St}}
\newtheorem{lem}{Lemma}
\newtheorem{theorem}[lem]{Theorem}
\newtheorem{cor}[lem]{Corollary}
\newtheorem{rem}[lem]{Remark}
\newtheorem{defi}[lem]{Definition}
\newcommand{\la}{\langle}
\newcommand{\ra}{\rangle}
\begin{document}
\title{Nonlinearity of matrix groups}
\author{Martin Kassabov and Mark Sapir }
\thanks{The research of the
first author was supported in part by the NSF grant DMS
0600244 and by the Centennial
Fellowship from the American Mathematics Society.
The work of the second author
was  supported in part by the NSF grant DMS 0700811.}
\maketitle

The aim of this short note is to answer a question by Guoliang Yu of
whether the group $\EL_3(\Z\la x,y \ra)$, where $\Z\la
x,y\ra$ is the free (non-commutative) ring, has any faithful finite dimensional linear
representations over a field. Recall that for every (associative
unitary) ring $R$ the group $\EL_n(R)$ is \blue{the subgroup of $\GL_n(R)$}
generated by all $n\times n$-elementary matrices
$x_{ij}(r)=\Id + r e_{ij}$ ($r\in R$, $1\le i\ne j\le n$).
Clearly, if $R$ has a faithful finite dimensional linear representation
over a field, then the group $\EL_n(R)$ also has a faithful finite dimensional linear
representation over the same field. \blue{The conclusion is true even if
$R$ has an ideal of finite index that has a faithful
finite dimensional representation (see Theorem~\ref{th1}).}

The converse implication (which
would imply the negative answer to G.~Yu's question) should have
been known for many years, but we could not find it in the
literature. There are many results about isomorphisms
between various matrix groups over (mostly commutative\blue{)} rings from the
original results of Mal'cev~\cite{Mal} to results of O'\blue{Meara}~\cite{OM}
to Mostow rigidity results~\cite{Mos}.

There are also many result about homomorphisms of  one general
matrix group into another. Churkin~\cite{Chu} proved that the wreath
product $\Z\wr \Z^n$ embeds into a matrix group over a field $K$ of
characteristic $0$ if and only if the transcendence degree of $K$
over its prime subfield is at least $n$ \blue{(a similar result is proved
in the case of positive characteristic)}. Hence $\SL_n(K)$ cannot
embed into $\SL_m(K')$ if $K, K'$ are fields of characteristic $0$
and the transcendence degree of $K$ is bigger than the transcendence
degree of $K'$.
\blue{ Much stronger non-embeddability results for general
linear groups over fields follow from the main result of Borel-Tits~\cite{BT}
(we are grateful to Yves de Cornulier for pointing out to this reference).
See also surveys~\cite{JWW} and~\cite{HJW}.}

The main result of the note is the following:

\begin{theorem}
\label{th1}
(a) Let $R$ be an associative unitary ring, $k\ge 3$.
The group $\EL_k(R)$ has a faithful finite dimensional
representation over $\C$ if and only if $R$ has a finite index
ideal $I$ that admits a faithful finite dimensional representation
over $\C$.

(b) The group $\EL_3(\Z\la x,y \ra)$ does not have a faithful finite
dimensional representation over any field.
\end{theorem}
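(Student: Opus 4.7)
The plan is to handle the characteristic zero and positive characteristic cases separately. Suppose first that $\rho\colon \EL_3(\Z\la x,y\ra)\to \GL_n(K)$ is a faithful finite-dimensional representation with $\mathrm{char}(K)=0$. The entries of $\rho(g)$ for $g$ in a finite generating set lie in a finitely generated subfield $K_0\subset K$, and every finitely generated field of characteristic zero embeds into $\C$, so I may assume $K=\C$. Part~(a) then supplies a finite-index ideal $I\subset R:=\Z\la x,y\ra$ together with a faithful embedding $\tau\colon I\hookrightarrow\Mat_m(\C)$ for some $m$.

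Setting $N=|R/I|$ I have $NR\subset I$, so $a:=Nx$ and $b:=Ny$ lie in $I$. The key step is to check that the non-unital subring $T\subset R$ generated by $a$ and $b$ is isomorphic to the free non-unital associative $\Z$-algebra on two generators: monomials of degree $d$ in $a,b$ are simply $N^d$ times monomials of degree $d$ in $x,y$, and the latter form part of a $\Z$-basis of $R$, so no nontrivial $\Z$-linear relation holds among them. The free non-unital $\Z$-algebra on two generators satisfies no polynomial identity (a nonzero multilinear identity can be falsified by evaluating at a tuple of sufficiently generic monomials in the two generators, e.g.\ $ab^i$), whereas $\Mat_m(\C)$ satisfies the Amitsur-Levitzki identity $s_{2m}$, and hence so does every subring. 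Restricting $\tau$ to $T$ then yields the contradiction.

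For $\mathrm{char}(K)=p>0$ I would use a rigidity input instead. The inclusion $\Z\hookrightarrow R$ of scalars gives $\SL_3(\Z)=\EL_3(\Z)\hookrightarrow\EL_3(R)$, and any faithful $\rho$ must restrict to a faithful representation of $\SL_3(\Z)$. But $\SL_3(\Z)$ is a higher-rank arithmetic lattice with Kazhdan's property~(T), and a classical consequence of Margulis superrigidity is that every finite-dimensional linear representation of $\SL_3(\Z)$ over a field of positive characteristic has finite image, contradicting the infinitude of $\SL_3(\Z)$. The principal obstacle in the overall argument is establishing the freeness claim for $T$; once that is in hand, the remaining ingredients (Amitsur-Levitzki and the Margulis-Kazhdan rigidity of $\SL_3(\Z)$) are standard.
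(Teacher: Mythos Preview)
Your argument for part~(b) is correct, but it differs substantially from the paper's.  The paper builds, via Theorem~\ref{th2}, an algebraic variety $V$ carrying a ring structure together with an injective ring map $\rho\colon \Z\la x,y\ra \to V$ coming from any faithful representation of $\EL_3(\Z\la x,y\ra)$, and then rules this out uniformly in all characteristics by the dimension--counting argument of Theorem~\ref{nonemebdding_any_char} (the chain $N_0\subsetneq N_1\subsetneq\cdots$ built from Zariski closures of spans of the $r_i$).  In particular the paper never invokes part~(a) in proving part~(b), and never splits on characteristic.

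Your route is legitimate but more eclectic.  In characteristic~$0$ you pass through part~(a) and then run a PI argument; this is essentially the content of the paper's Corollary~\ref{nonemebdding_char0}, which the paper records but does \emph{not} use for (b), preferring the uniform Theorem~\ref{nonemebdding_any_char}.  Your verification that the non-unital subring $T=\la Nx,Ny\ra$ is free and violates $s_{2m}$ is fine (indeed $s_{2m}(ab,ab^2,\dots,ab^{2m})\ne 0$ since the resulting monomials in $x,y$ are distinct).  In positive characteristic you abandon the ring--variety viewpoint entirely and instead observe that the integer Heisenberg group sits inside $\SL_3(\Z)\subset\EL_3(\Z\la x,y\ra)$; since the central commutator in any linear representation over characteristic $p$ must have all eigenvalues roots of unity and unipotent part of $p$-power order, it has finite order, so no faithful representation exists.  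Invoking full Margulis superrigidity here is heavier than needed---the Heisenberg obstruction (or the normal subgroup theorem plus that obstruction) already suffices---but the conclusion is correct.

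One point to flag: your proposal addresses only part~(b); you use part~(a) as an input but give no argument for it.  The paper's proof of~(a) requires the ring--variety $V$ of Theorem~\ref{th2}, the structural Lemma~\ref{linear} showing $V_0$ is a finite-dimensional $\C$-algebra, and (for the converse) the finiteness of $\St_3(R/I)$.  None of this is reproduced in your outline, so as a proof of the full Theorem~\ref{th1} the proposal is incomplete.
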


The proof of this theorem is given at the end of the paper (after
Remark~\ref{rem15}). Part (a) of Theorem~\ref{th1} does not hold if
we replace $\C$ by a field of positive characteristic (see
Remark~\ref{rem}). \blue{Note that the ``finite index" condition in
part (a) of Theorem~\ref{th1} is necessary because there are finite
rings (say, the endomorphism ring of the Abelian group $\Z/p\Z\times
\Z/p^2\Z$ where $p$ is a prime) that do not have any finite
dimensional faithful representations over fields and even over any
commutative rings~\cite{Berg}.} \blue{Also parts (a) and (b) of Theorem~\ref{th1}  do not
hold for $\EL_2$. Indeed, by Exercise 2 of Section 2.7 in \cite{Co},
the group $\GE_2(\Q\la x,y\ra)$ generated by the elementary and diagonal matrices of $\GL_2(\Q\la x,y\ra)$, is isomorphic to the group
$\GE_2(\Q[x])$ which is a subgroup of $\GL_2(\C)$, and so $\EL_2(\Z\la x,y\ra)<\GE_2(\Q\la x, y\ra)$ is linear.}

Let $\pi : \EL_k(R) \to \GL_n(K)$ be a linear representation of the
group $\EL_k(R)$, where $K$ is an algebraically closed field, \blue{$k\ge 3$}.

\begin{defi}
Let $U$ denote the set $U=\{\pi(x_{13}(r)) \mid r\in R\}$ and $V$ be
the Zariski closure of $U$. By construction $V$ is an algebraic
variety.
\end{defi}

\begin{theorem}
\label{th2}
There exist two distinguished elements $\mathbf 0$ and $\mathbf 1$ in $V$ and
polynomial maps $+, \times :  V \times V \to V$,  $-: V \to V$,
which give $V$ a structure of an associative ring.
Moreover the map
$\rho : R \to U\subset V$ defined by $\rho(r) = \pi(x_{13}(r))$ is
a ring homomorphism.
\end{theorem}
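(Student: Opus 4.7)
The plan is to transport the ring structure of $R$ to $V$ via the Steinberg commutator relations, using Zariski density of $U$ in $V$ to lift polynomial identities from $U$ to $V$. For addition, I observe that $U$ is an abelian subgroup of $\GL_n(K)$, since $\pi(x_{13}(a))\pi(x_{13}(b))=\pi(x_{13}(a+b))$ and $\pi(x_{13}(0))=\Id$. The Zariski closure of a subgroup of an algebraic group is again an algebraic subgroup, so $V$ is an abelian algebraic subgroup of $\GL_n(K)$. Thus group multiplication and inversion restrict to polynomial maps $+\colon V\times V\to V$ and $-\colon V\to V$, and I set $\mathbf{0}:=\Id$.

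For multiplication I use the Steinberg identity $x_{13}(ab)=[x_{12}(a),x_{23}(b)]$, which holds in $\EL_k(R)$ because $k\ge 3$. To keep everything inside $U$ I conjugate: the Weyl-type elements $\sigma_1:=x_{23}(1)x_{32}(-1)x_{23}(1)$ and $\sigma_2:=x_{21}(1)x_{12}(-1)x_{21}(1)$ lie in $\EL_k(R)$, and a short matrix calculation gives $\sigma_1 x_{13}(r)\sigma_1^{-1}=x_{12}(r)$ and $\sigma_2 x_{13}(r)\sigma_2^{-1}=x_{23}(r)$. I then define
\[
u\times v \;:=\; \bigl[\pi(\sigma_1)\,u\,\pi(\sigma_1)^{-1},\ \pi(\sigma_2)\,v\,\pi(\sigma_2)^{-1}\bigr], \qquad \mathbf{1}:=\pi(x_{13}(1)).
\]
This is a polynomial map $V\times V\to\GL_n(K)$. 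On $U\times U$ it equals $\pi(x_{13}(ab))\in U\subset V$, so the preimage of $V$ under $\times$ is a Zariski-closed subset of $V\times V$ containing the dense subset $U\times U$; hence $\times$ actually takes values in $V$.

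It remains to verify the ring axioms and that $\rho$ is a ring homomorphism. Each axiom (commutativity and associativity of $+$, associativity of $\times$, distributivity, identity laws for $\mathbf{0}$ and $\mathbf{1}$) is an equality between two polynomial maps $V^{\ell}\to V$; each equality holds on the Zariski-dense set $U^{\ell}\subset V^{\ell}$, because on $U$ the bijection $r\mapsto\pi(x_{13}(r))$ intertwines the operations just defined with the ring operations of $R$; by density each equality holds on all of $V^{\ell}$. The same comparison on $U$ shows $\rho$ preserves $0$, $1$, $+$, and $\times$. The main technical point, which recurs throughout, is the Zariski-density argument: every newly constructed polynomial map must be shown to land in $V$ rather than merely in $\GL_n(K)$, and every axiom must be lifted from $U$ to $V$; both reduce to the fact that finite Cartesian powers of $U$ are Zariski dense in the corresponding powers of $V$ in affine space.
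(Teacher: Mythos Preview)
Your proof is correct and follows essentially the same approach as the paper: addition is matrix multiplication in $\GL_n(K)$, multiplication is defined via the Steinberg commutator $[x_{12}(a),x_{23}(b)]=x_{13}(ab)$ after conjugating by Weyl-type elements (your $\sigma_1,\sigma_2$ are exactly the paper's $w_{23},w_{12}$), and all ring axioms are lifted from $U$ to $V$ by Zariski density. Your version is slightly more explicit about why $\times$ lands in $V$ and invokes the closure-of-a-subgroup fact for the additive structure, but the content is the same.
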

\begin{proof}
Define \blue{the ``addition''} $+ : U \times U \to U$ as follows $u_1 + u_2 :=  u_1u_2$,
where the multiplication on
the right is the one in the group $\GL_n(K)$. It is clear that this map is given by some algebraic
function therefore it extends to a polynomial map on $V \times V$.
Similarly we can define a map
$-: V \to V$ as the extension of the inversion $u \to u^{-1}$.
Notice that by construction we have the identities
$$
\rho(r_1) + \rho(r_2)  = \rho(r_1+ r_2)
\quad \mbox{and} \quad
-\rho(r)= \rho(- r),
$$
i.e., the map $\rho: R \to U$ is \blue{a} homomorphism between Abelian
groups. The identity element of $\GL_k$ is in $U\subset V$ and we
will denote it as the distinguished element $\mathbf{0} \in V$, since \blue{it} is
the identify element of $U$ with respect to the addition.

These two operations turn $V$ into an Abelian group\blue{:} since all the axioms
are satisfied on the Zariski dense set $U$  \blue{they} are
satisfied on the whole variety $V$.

\medskip

In order to define the \blue{``multiplication''} we need to use two special
elements $w_{23}$ and $w_{12}$ in $\EL_k(R)$ which have the
properties
$$
w_{12} x_{13}(r) w_{12}^{-1} = x_{23}(r)
\quad \mbox{and} \quad
w_{23} x_{13}(r) w_{23}^{-1} = x_{12}(r)
$$
The existence of these elements is well know\blue{n} and they can be easily
written as product of generators in $\EL_k(R)$, for example we can
take \blue{the} matrices (embedded in \blue{the} top left corner
if $\EL_k(R)$).
$$
w_{12} = \left(\begin{array}{ccc} 0& -1 & 0 \\ 1 & 0 & 0 \\ 0 & 0 & 1 \end{array} \right)
\quad \mbox{and} \quad
w_{23} = \left(\begin{array}{ccc} 1 & 0 & 0 \\ 0 & 0 & 1 \\ 0 & -1 & 0 \end{array} \right)
$$

Now we can define the algebraic map $\times : U \times U \to
\GL_n(K)$ as follows
$$
u_1 \times u_2 :=  [w_{23}u_1w_{23}^{-1}, w_{12}u_2w_{12}^{-1}]
$$
The commutator relation $[x_{12}(r),x_{23}(s)] = x_{13}(rs)$ implies that
$$
\rho(r_1) \times \rho(r_2)  = \rho(r_1.r_2),
$$
thus $\times$ is a map from $U \times U$ to $U$ and can be extended to a
polynomial map from $V \times V$ to $V$.
The element $\rho(1)$ plays the role of the unit with respect to
this multiplication and we will call it $\mathbf{1}\in V$\blue{.}

The same argument as before shows that $\mathbf{0},\mathbf{1}$
and the maps $+, -$ and $\times$ turn $V$ into an associative ring with a unit.
\end{proof}

\begin{lem}
\label{lem}
Let $V$ be an algebraic variety with two algebraic operations,
given by polynomial  functions,
which turn it into an associative ring
with $1$. 
Then:

\blue{
(a) any point on $V$ is non-singular,
thus the irreducible components of $V$ do not intersect;
}

\noindent Let $V_0$ denote the irreducible \blue{(}connected) component of $\mathbf{0}$ in $V$ then:

(b)  $V_0$ is a two-sided ideal in $V$;

(c) the quotient $V/V_0$ is a finite ring.
\end{lem}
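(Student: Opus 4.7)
The plan rests on a single observation: the ring operations make the additive group of $V$ act on $V$ by biregular automorphisms, namely the translations $\tau_v:w\mapsto w+v$, each with polynomial inverse $\tau_{-v}$. Once this transitivity by variety automorphisms is in hand, together with the basic fact that polynomial images of irreducible sets are irreducible, all three parts essentially drop out.

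For part (a), I would first pick any non-singular point $p\in V$; such a point exists because on each irreducible component the singular locus is a proper closed subset. Since $\tau_{-p}$ is a biregular automorphism of $V$, it preserves the non-singular locus and carries $p$ to $\mathbf{0}$, so $\mathbf{0}$ is non-singular; then for arbitrary $q\in V$, the translation $\tau_q$ carries $\mathbf{0}$ to $q$ and again preserves non-singularity, so $q$ is non-singular as well. Because the local ring at a point lying on two distinct components of $V$ has at least two minimal primes and therefore cannot be regular, non-singularity everywhere forces the irreducible components of $V$ to be pairwise disjoint.

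For part (b), the irreducibility of $V_0$ makes $V_0\times V_0$ irreducible, so its image under the polynomial map $+$ is an irreducible subset of $V$ containing $\mathbf{0}+\mathbf{0}=\mathbf{0}$; by maximality of $V_0$ among irreducible closed subsets through $\mathbf{0}$, this image lies in $V_0$. The same reasoning applied to the negation map shows that $V_0$ is an additive subgroup. For the ideal property, fix $v\in V$ and consider the polynomial map $w\mapsto v\times w$ on $V_0$: the identity $v\times\mathbf{0}=\mathbf{0}$ holds on the Zariski-dense set $U$ (from $r\cdot 0=0$ in $R$ transported by $\rho$) and therefore on all of $V$, so the image of $V_0$ is irreducible, contains $\mathbf{0}$, and hence lies in $V_0$; right multiplication by $v$ is handled identically.

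For part (c), each coset $v+V_0=\tau_v(V_0)$ is the image of the irreducible component $V_0$ under a biregular automorphism, so it is itself an irreducible component of $V$. Conversely, given any component $C$ and any $v\in C$, the coset $v+V_0$ is an irreducible component contained in $C$, so by the disjointness from (a) it must equal $C$. Thus the cosets of $V_0$ coincide with the irreducible components of $V$, and since any algebraic variety is Noetherian it has only finitely many irreducible components, so $V/V_0$ is finite. The only step that is not pure bookkeeping is the verification of global identities such as $v\times\mathbf{0}=\mathbf{0}$ on $V$ by extending them from the dense set $U$; this is the closest thing to an obstacle in an otherwise completely formal argument.
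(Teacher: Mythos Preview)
Your proof is correct and follows essentially the same approach as the paper's: transitivity of additive translations for (a), irreducibility of the image of $V_0$ under multiplication by a fixed $v$ for (b), and finiteness of the set of irreducible components for (c). One small remark: in part (b) you justify $v\times\mathbf{0}=\mathbf{0}$ by density of $U$, but the lemma is stated abstractly for any algebraic variety that is already assumed to be a ring, so this identity is simply one of the ring axioms and no appeal to $U$ or $\rho$ is needed (or even available, in the abstract setting).
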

\begin{proof}
\blue{(a)}
The structure of an Abelian group on $V$ with respect to the addition implies
that the automorphism group of the variety $V$ acts transitively on the points,
therefore all points are non-singular;

(b)
For any $v\in V$ the closure of $v\times V_0$ is a irreducible sub-variety $V$
(since it is an image of a irreducible one) which contains $\mathbf{0}$,
therefore it is a subset of $V_0$.
This shows that $V_0$ is a left ideal in $V$.
Similar argument shows that $V_0$ is a right ideal;

(c)
It is a classical result that any algebraic variety
has only finitely many irreducible components.
\end{proof}

\begin{lem}
\label{linear}
Let $V$ be an algebraic variety over $\C$, with two
algebraic operations which turn it into an associative ring with
$1$. Then the irreducible component  $V_0$ of $V$ is isomorphic to a
finite dimensional algebra over $\C$, i.e., the ring $V$ is
virtually linear over $\C$.
\end{lem}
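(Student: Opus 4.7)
The plan is to show that the connected component $V_0$ is isomorphic as an algebraic group to a vector group $\C^d$, and then to observe that any polynomial ring multiplication on $\C^d$ is automatically $\C$-bilinear, making $V_0$ a finite-dimensional $\C$-algebra. Throughout we use that $V_0$ is smooth, is a two-sided ideal of $V$, and has finite index in $V$, by the three parts of Lemma~\ref{lem}.

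I would first show that $V_0$ is affine. Chevalley's structure theorem applied to the connected commutative algebraic group $V_0$ gives a short exact sequence $0\to H\to V_0\to A\to 0$ in which $H$ is the maximal connected affine subgroup and $A$ is an abelian variety. For each $v\in V$, left multiplication $L_v\colon V_0\to V_0$ is an algebraic group endomorphism (by distributivity) and carries $H$ into itself by maximality of $H$, so it descends to $\bar L_v\in\End(A)$. The rigidity lemma for abelian varieties, applied to each connected component of $V$, shows that $v\mapsto\bar L_v$ is constant on components; this produces an additive map $V/V_0\to\End(A)$ sending the class of $\mathbf 0$ to $0$ and the class of $\mathbf 1$ to $\mathrm{id}_A$ (since $L_{\mathbf 1}=\mathrm{id}_V$). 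Because $V/V_0$ is finite and $\End(A)$ is torsion-free, this map must vanish, so $\mathrm{id}_A=0$ and $A=0$; hence $V_0$ is affine.

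Next, as a connected commutative affine algebraic group over $\C$, $V_0\cong T\times U$ with $T=(\C^*)^b$ and $U=\C^a$; the absence of non-constant algebraic homomorphisms between $\C$ and $\C^*$ forces every algebraic group endomorphism of $V_0$ to preserve each factor. Restricting $L_v$ to $T$ gives $N(v)\in\End(T)=\Mat_b(\Z)$, and the analogous rigidity for tori (immediate from the description of units in Laurent polynomial rings) makes $v\mapsto N(v)$ constant on each component of $V$. Repeating the argument of the previous paragraph with $\Mat_b(\Z)$ in place of $\End(A)$ forces $b=0$, and so $V_0\cong\C^a$ as an algebraic group. Finally, any polynomial additive map $\C^a\to\C^a$ is automatically $\C$-linear (the homogeneous components of degree $\ge 2$ cannot satisfy the Cauchy equation), so the polynomial biadditive multiplication on $V_0$ is $\C$-bilinear. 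This exhibits $V_0$ as a finite-dimensional associative $\C$-algebra, which embeds in a matrix ring via the left regular representation of its unitization; combined with $V_0$ having finite index in $V$, it follows that $V$ is virtually linear over $\C$. The main obstacle is the first step, where rigidity of abelian varieties must be combined with the crucial presence of the unit $\mathbf 1$ and the finiteness of $V/V_0$ to produce a torsion obstruction in the torsion-free module $\End(A)$; the torus step is a direct analog and the $\C$-bilinearity is elementary.
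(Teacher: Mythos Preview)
Your argument is correct and complete, but it follows a different path from the paper's. The paper works analytically: it passes to the usual topology on $\C^n$, notes that $(V_0,+)$ is a connected abelian complex Lie group, and uses only its fundamental group $\Gamma\cong\Z^k$. Left multiplication by $v\in V$ induces an endomorphism of $\Gamma$, giving a continuous ring map $V\to\End(\Gamma)\cong\Mat_k(\Z)$; since the target is discrete this factors through the finite ring $V/V_0$, and since $\Mat_k(\Z)$ has no nonzero finite additive subgroups one gets $k=0$, so $V_0$ is simply connected and hence $\C^d$. Your proof replaces this single topological invariant by two algebraic steps: Chevalley's theorem plus rigidity of abelian varieties to kill the projective part, and then the discreteness of $\End(\mathbb G_m^b)=\Mat_b(\Z)$ to kill the torus. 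The core mechanism---the image of $\mathbf 1$ is the identity in a torsion-free endomorphism module, while the source is finite---is identical in both proofs; the paper just packages the abelian-variety and torus contributions together into $\pi_1$. Your version has the virtue of staying inside algebraic geometry and making explicit why positive characteristic breaks the argument (unipotent groups need no longer be vector groups), whereas the paper's version is shorter and needs only elementary Lie theory. Either way, the final step---that a polynomial biadditive map on $\C^a$ is $\C$-bilinear---is the same.
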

\begin{proof}
Note that the additive group
$V_+$ of $V$ is an Abelian Lie group
over $\C$.\footnote{We consider the topology on $V$ induced by the usual topology on \blue{$\C^n$},
instead of the Zariski topology. This is one of the reason\blue{s} why this argument does
not work over fields of positive characteristic.} By~\cite{Pontr} $V_0$ is a product of a finite number of
copies of $\C$ and a finite number of 1-dimensional 
\blue{tori}.
Therefore the fundamental group $\Gamma$ of $V_0$ (based at $\mathbf 0$) is
isomorphic to $\Z^k$ for some
$k<\infty$, and the product of
any two loops in $\Gamma$ is the same as their point-wise sum in
$V_0$.

Multiplication by an element in $V$ induces an endomorphism of
$\Gamma$ and so we have a map  $\phi$ from  $V$ to the
endomorphism ring $\End(\Gamma)$ of $\Gamma$. This map is continuous and a ring
homomorphism because it preserves multiplication by construction and
the distributive law implies that $\phi$ send the sum of the loops
to the point-wise sum of their images. The endomorphism ring is
discrete, therefore the image of $V_0$ is trivial and $\phi$ factors
through a map $\bar \phi : V/V_0 \to \End(\Gamma)$. The ring
$\End(\Gamma)$ does not have any finite sub-rings since the
characteristic is $0$, unless $\Gamma$ is the trivial group,
because the order of the identity \blue{in $\End(\Gamma)\simeq \Mat_k(\Z)$}
is infinite.
\blue{Thus $\Gamma$ is trivial and 
$V_0$ is a simply connected Abelian Lie group over $\C$.}
Therefore $V_0$ is isomorphic to a finite dimensional vector space over $\C$.
The distributive laws imply that  multiplication on $V_0$ is
bilinear, i.e., $V_0$ is a finite dimensional algebra over $\C$.
\end{proof}

\begin{rem}
\label{strange_ring}
\rm{ The analog of Lemma~\ref{linear} is not
true in the case of positive characteristic. It is possible to
construct examples where the exponent of the additive group of $V$
is finite but is not equal to the characteristic of the field.

Here is one simple example \blue{(it is somewhat similar to the example from~\cite{Berg} mentioned above)}: Let $K$ be an infinite field of
characteristic $2$ and let $V = \blue{K\times K}$ with the following operations:
$$
(a,b) + (c,d) = (a + c, ac + b +d)
\quad\quad
(a,b) \times (c,d) = (ac, bc^2 + a^2 d)
$$
\blue{One can verify directly that $V$ is a commutative ring.}
The elements $(0,b)$ form an ideal
$I$ with zero multiplication, $V/I$ is isomorphic as a ring to the
field $K$ (identified as a set with $\{(a,0)\mid a\in K\}$), the
action of $V/I$ on $I$ is given by $(a,0)(0,d)=(0,a^2d)$. Every
element of the form $(a,b)$, $a\ne 0$, is invertible (the inverse is
$(a^{-1}, \frac{b}{a^2})$), \blue{i.e., $V$ is a local ring with a maximal ideal} $I$.
Therefore that ring does not have proper
ideals of finite index. This ring is not linear over any field since
all elements of the form $(a,b)$, $a\ne 0$, have ``additive'' order
$4$. Hence $V$ is not virtually linear.
}
\end{rem}

\begin{cor}
\label{nonemebdding_char0}
Let $V$ be an algebraic variety over a
field of characteristic $0$, with two algebraic operations which
turn it into an associative ring with $1$. Then any ring
homomorphism $\phi : \Z\la x,y\ra \to V$ has non-trivial kernel.
\end{cor}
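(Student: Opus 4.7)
The plan is to combine Lemma~\ref{lem} and Lemma~\ref{linear} with a classical polynomial-identity argument. Suppose for contradiction that $\phi$ is injective. First I would reduce to the case where the base field is $\C$: the image $\phi(\Z\la x,y\ra)$ is countable, and together with the coefficients of the polynomial maps defining the ring operations it lies in some countable subfield of the algebraic closure. Any such subfield of characteristic $0$ embeds in $\C$, so after base change we may assume that $V$ is an algebraic variety with ring structure defined over $\C$ containing the image of $\phi$, so that Lemmas~\ref{lem} and~\ref{linear} apply.

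Applying Lemma~\ref{lem}(c), the preimage $J := \phi^{-1}(V_0)$ is an ideal of finite index in $\Z\la x,y\ra$. By Lemma~\ref{linear}, $V_0$ is a finite-dimensional associative algebra over $\C$; adjoining a unit if necessary, it embeds into $\Mat_m(\C)$ for some $m$ via the regular representation. By the Amitsur--Levitzki theorem, $\Mat_m(\C)$ satisfies the standard polynomial identity $S_{2m}$ of degree $2m$, hence so does $V_0$, and via the injection $\phi|_J : J \hookrightarrow V_0$ the ring $J$ itself satisfies $S_{2m}$.

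I would then derive a contradiction by showing that no nontrivial polynomial identity can hold on a finite-index ideal of $R := \Z\la x,y\ra$. Since $J$ has finite index, $NR \subseteq J$ for some positive integer $N$. Take $2m$ elements $u_1,\dots,u_{2m}\in R$ that freely generate a free associative $\Z$-subring of $R$ (for instance $u_i := x y^i$; a standard leading-monomial comparison confirms freeness). Then $Nu_1,\dots,Nu_{2m}\in J$, and multilinearity of the standard polynomial gives
$$
S_{2m}(Nu_1,\dots,Nu_{2m}) \;=\; N^{2m}\,S_{2m}(u_1,\dots,u_{2m}).
$$
Since $u_1,\dots,u_{2m}$ freely generate a free subring and $S_{2m}$ is a nonzero polynomial, $S_{2m}(u_1,\dots,u_{2m})$ is a nonzero element of $R$; as $R$ is torsion-free, the right-hand side is nonzero in $J$, contradicting the fact that $J$ satisfies $S_{2m}$.

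The main obstacle will be the reduction from a general characteristic-zero field to $\C$, which is needed because the proof of Lemma~\ref{linear} invokes the analytic topology of $\C^n$; the point is simply that an at-most-countable subring of $V(\bar K)$ is absorbed by a countable subfield which embeds in $\C$. Once this reduction is carried out, the remainder is a routine application of Amitsur--Levitzki together with the elementary facts that $\Z\la x,y\ra$ contains free subrings of arbitrary finite rank and is torsion-free.
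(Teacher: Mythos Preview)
Your argument is correct and follows essentially the same strategy as the paper: both proofs invoke Lemma~\ref{linear} to conclude that $V_0$ is a finite-dimensional $\C$-algebra and then finish with a polynomial-identity argument. The paper's version is a two-line appeal to Rowen (``$V$ is virtually linear, hence PI, while $\Z\la x,y\ra$ is not''); you unpack this by pulling back to the finite-index ideal $J=\phi^{-1}(V_0)$, applying Amitsur--Levitzki to $V_0\hookrightarrow\Mat_m(\C)$, and exhibiting explicitly (via $u_i=xy^i$ and the torsion-freeness of $R$) why no finite-index ideal of $\Z\la x,y\ra$ can satisfy a standard identity. This is the same idea, carried out by hand rather than by citation.

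The one place you add genuine content is the Lefschetz-type reduction from an arbitrary characteristic-$0$ field to $\C$. The Corollary is stated over any such field, yet Lemma~\ref{linear}---on which both proofs depend---is proved only over $\C$ (using the analytic topology). The paper's proof silently applies the ``previous lemma'' without addressing this discrepancy; your descent to a countable subfield and embedding into $\C$ makes the argument complete as stated.
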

\begin{proof}
By the previous lemma $V$ is virtually linear therefore it satisfies
some polynomial identity~\cite{Rowen}, but the ring $\Z\la x,y\ra$
does not satisfy any polynomial identity~\cite{Rowen}. Therefore
$\phi$ is not injective.
\end{proof}


\blue{ Using Lemma~\ref{linear} we can easily recover a significant
part of the result by Chen~\cite{Ch}. Let $D$ be a (noncommutative)
division ring, and consider a group homomorphism from ${\rm
SL}_n(D)$ to $G(k)$ with Zariski-dense image, where $G$ is a simple
algebraic group defined over a field $k$. Then $D$ must be
finite-dimensional over its center. The following corollary recovers
that result in the case of characteristic 0 and $n\ge 3$.} \blue{
\begin{cor}
Let $D$ be a non-commutative division ring with characteristic $0$.
Then the group $\EL_n(D)$, $n\ge 3$, has nontrivial finite
dimensional representations in characteristic $0$ if and only if $D$
is finite dimensional over its center.
\end{cor}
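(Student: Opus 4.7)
The plan is to handle the two directions separately. For the forward direction, which is routine, I would note that if $D$ has finite dimension $d$ over its center $Z$, then the left regular representation embeds $D$ as a $Z$-subalgebra of $\End_Z(D)\cong \Mat_d(Z)$, so $\EL_n(D)$ embeds into $\GL_{nd}(Z)$, giving a faithful (in particular nontrivial) finite-dimensional representation over the characteristic-$0$ field $Z$.

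For the converse, let $\pi:\EL_n(D)\to \GL_m(K)$ be a nontrivial representation over a field $K$ of characteristic $0$. First I would extend $K$ to its algebraic closure so that Theorem~\ref{th2} applies, producing the algebraic ring $V$ and the ring homomorphism $\rho:D\to V$, $\rho(r)=\pi(x_{13}(r))$. Since $n\ge 3$, the Weyl-type elements $w_{12},w_{23}$ from the proof of Theorem~\ref{th2} conjugate $x_{13}$ to $x_{12}$ and $x_{23}$, and further conjugations reach every elementary generator, so if $U=\rho(D)$ were a single point then $\pi$ would be trivial. Hence $\mathbf{0}\ne \mathbf{1}=\rho(1)$, and because $\ker\rho$ is a proper two-sided ideal of the division ring $D$ it must be $0$, giving an injective $\rho:D\hookrightarrow V$.

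Next I would push $D$ inside the connected component $V_0$ of $\mathbf{0}$. By Lemma~\ref{lem} $V_0$ is a two-sided ideal of $V$ with $V/V_0$ finite, and by Lemma~\ref{linear} $V_0$ is a finite-dimensional $K$-algebra. The preimage $D_0=\rho^{-1}(V_0)$ is a two-sided ideal of $D$, so $D_0$ equals $0$ or $D$; the first case would embed $D$ into the finite ring $V/V_0$, which is impossible since $\Q\subseteq D$. Thus $\rho(D)\subset V_0$ sits inside a $K$-algebra of finite dimension $d=\dim_K V_0$. Any such algebra satisfies the standard polynomial identity $s_{d+1}$ (it is alternating and vanishes on any $d+1$ linearly dependent elements), so $D$ is a primitive PI-ring, and Kaplansky's theorem (see~\cite{Rowen}) forces $D$ to be finite-dimensional over its center.

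The main obstacle is that Lemma~\ref{linear} is proved specifically over $\C$, its proof using the Euclidean topology on $\C^n$, whereas we need it over an arbitrary algebraically closed field of characteristic $0$. Following the implicit reduction already used in the proof of Corollary~\ref{nonemebdding_char0}, I would handle this by choosing a finitely generated subfield $K_0\subset K$ over which $V$ and its ring operations are defined, embedding $K_0\hookrightarrow \C$, applying Lemma~\ref{linear} to the base change $V\otimes_{K_0}\C$, and descending the finite-dimensionality of $V_0$ back to $K$ by faithful flatness (equivalently, by a Lefschetz-style transfer between algebraically closed fields of the same characteristic). Once this reduction is in place, everything else, including the conjugacy argument for $U$ and the invocation of Kaplansky's theorem, is routine.
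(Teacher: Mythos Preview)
Your proof is correct and follows essentially the same route as the paper: embed $D$ into $V$ via Theorem~\ref{th2}, use that a division ring has no proper two-sided ideals to get injectivity of $\rho$, invoke Lemma~\ref{linear} to conclude that $D$ satisfies a polynomial identity, and finish with Kaplansky's theorem. You are more careful than the authors on two points they gloss over---why $\pi$ nontrivial forces $\rho$ nontrivial, and the Lefschetz-type reduction from an arbitrary characteristic-$0$ field to $\C$ needed before Lemma~\ref{linear} applies---but the underlying strategy is identical.
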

\begin{proof}
Let $\pi: \EL_n(D) \to \GL_n(k)$ be a nontrivial representation of
the group $\EL_n(D)$. Therefore the map $\rho: D \to U$ is a ring
homomorphism with a non-trivial image and $\rho$ has to be an
isomorphism since $D$ does not have any non-trivial ideals. By
Lemma~\ref{linear} $D$ is linear and thus it satisfies some
polynomial identity. Finally by a theorem of Kaplansky~\cite{He}
every division algebra $D$ satisfying a polynomial identity is
finite dimensional over its center. The other direction is obvious.
\end{proof}
}

\begin{lem}
\label{l7}
Let $V$ be an algebraic variety over a field $K$ (of arbitrary
characteristic) with two algebraic operations which turn it into an
associative ring with $1$. If $V$ is irreducible then the
multiplicative group of $V$ is linear over $K$.
\end{lem}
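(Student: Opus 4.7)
\medskip

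\noindent\emph{Proof plan.} The strategy is to exhibit the multiplicative group $V^*$ as an affine algebraic group over $K$ and then invoke the classical theorem that every affine algebraic group embeds as a closed subgroup of some $\GL_n$.

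First I would realize $V^*$ as an affine algebraic group. The natural construction is to take the closed subvariety
\[
\tilde V^* \;=\; \{(u,v)\in V\times V : u\times v = v\times u = \mathbf{1}\}
\]
of $V\times V$, equipped with multiplication $(u_1,v_1)(u_2,v_2)=(u_1\times u_2,\,v_2\times v_1)$ and inversion $(u,v)\mapsto(v,u)$. The first projection is a bijection from $\tilde V^*$ onto the set-theoretic $V^*$, and we transport the algebraic-group structure through it. This exhibits $V^*$ as an affine algebraic group over $K$.

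Next I would construct the faithful representation via the action of $V^*$ on the coordinate ring $K[V]$. The morphism $V^*\times V\to V$ given by left multiplication pulls back to an action of $V^*$ on $K[V]$ by $K$-algebra automorphisms. By the standard locally-finite principle for algebraic group actions on affine varieties (see, e.g., Humphreys, \S8.6), every finite-dimensional subspace of $K[V]$ is contained in a finite-dimensional $V^*$-stable subspace; since $K[V]$ is finitely generated as a $K$-algebra, I may choose such a subspace $W$ which in addition contains a set of algebra generators for $K[V]$. Restriction then yields a morphism of algebraic groups $\rho:V^*\to\GL(W)$.

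Faithfulness follows at once: if $\rho(v)$ acts trivially on $W$, then $v$ acts trivially on all of $K[V]$, so left multiplication $L_v:V\to V$ is the identity map, and evaluating at $\mathbf{1}$ gives $v=\mathbf{1}$. The only step that requires any care is confirming that $V$ is affine (automatic in the paper's context, where $V$ is always realized as a subvariety of some $\GL_n(K)$) so that the coordinate-ring construction applies; the rest is a direct invocation of a classical result. Irreducibility of $V$ is not used by this argument, but is natural in the paper's setting, where the lemma will be applied componentwise to varieties whose irreducible components do not intersect.
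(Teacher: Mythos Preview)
Your argument is correct and takes a genuinely different route from the paper's. The paper does not invoke the general linearity theorem for affine algebraic groups; instead it works with the local ring $A$ of $V$ at $\mathbf 0$ and its maximal ideal $I$. Left multiplication by $v\in V^*$ fixes $\mathbf 0$ and so induces automorphisms of each jet space $A/I^n$; the kernels $S_n\subset V^*$ of the resulting maps $\psi_n:V^*\to\Aut(A/I^n)$ form a descending chain of closed subvarieties whose intersection is $\{v:(v-1)V_0=0\}$. Irreducibility is used precisely here: it forces $V_0=V\ni\mathbf 1$, so this intersection is trivial, and Noetherianity then gives a single $M$ with $\psi_M$ injective. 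Since $A/I^M$ is finite-dimensional over $K$, this yields the faithful linear representation.

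Your approach trades this hands-on jet argument for the standard package (locally finite action on $K[V]$, stable subspace containing algebra generators), which is cleaner but imports more machinery. Two points of comparison are worth noting. First, as you observe, your faithfulness step only needs $\mathbf 1\in V$, so irreducibility is not actually used; the paper's local-ring approach, by contrast, genuinely needs $\mathbf 1\in V_0$. Second, the paper's proof, being local, does not require $V$ to be affine, whereas yours does; you are right that affineness holds in every instance the paper cares about (closures inside $\GL_n(K)$, or $K^{18}$ in Remark~\ref{rem}), but the lemma as stated is for an abstract algebraic variety, and the paper's argument covers that generality without further comment.
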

\begin{proof}
Let $A$ denote the ring of germs of rational functions on $V_0$
defined around the point $\mathbf 0$. Let $I$ be the maximal ideal in $A$
consisting of germs that are $0$ at $\mathbf 0$.
By Lemma~\ref{lem}, all points of $V$, including the point $\mathbf 0$, are
non-singular. Therefore $I/I^2$ is a finite dimensional vector space
over the field $A/I=K$, and the dimension coincides with the
dimension of $V$.

The left multiplication $l_v$ by any $v\in V$ defines an algebraic
map $V_0 \to V_0$ which fixes $\mathbf 0$ therefore it \blue{induces an} 
endomorphism $l_v: A \to A$. It is clear that these maps define a
group homomorphism $\psi: V^* \to \Aut(A)$ by \blue{$(\psi(v)(f))(x) = f(l_v(x))$},
where $V^*$ is the set all invertible elements in $V$. The kernel $S$ of
the map $\psi$ consists of all elements $v$ in $V^*$ such that
$(v - 1) \times V_0 =\mathbf 0$, because the triviality of $l_v$ implies that the
multiplication by $v$ gives the identity map from $V_0$ to $V_0$. If
$V$ is connected then $V_0$ contains $1$ thus the only element in
the kernel of $\psi$ is the identity.

Consider the maps  $\psi_n: V^* \to \Aut(A/I^n)$ induced by $\psi$
and their kernels $S_n=\ker \psi_n$. By construction \blue{the sets} 
$S_n$ form a decreasing sequence of sub-varieties of $V$ and that
$\cap_n S_n = S$. By the Noetherian property, we have that there
exist $M > 0$ such that $S=S_M$, i.e., the map $\psi_M$ is
injective.

The group $\Aut(A/I^M)$ is linear over $K$ because it is inside the
group of all linear transformations of $A/I^M$ considered as a
(finite dimensional) vector space over $K$,
\blue{i.e., $\psi_M$ is a faithful
linear representation of $V$.}
\end{proof}

\begin{rem}
\label{rem}
\rm{Let $V$ be the variety with the ring structure constructed in
Remark~\ref{strange_ring}. \blue{The group $\EL_3(V)$ is a
subgroup of the multiplicative semigroup of the ring of $3\times 3$ matrices over $V$,
which is an algebraic variety (isomorphic to $K^{18}$, where the addition and
the multiplication are given by some polynomial functions of degree
$4$).
Lemma~\ref{l7} implies that $\EL_3(V)$ is linear group over $K$.}
Thus, there exist\blue{s} a ring $R$ which is not (virtually) linear
over any field, but the group $\EL_3(R)$ is linear.
Hence part (a) of Theorem~\ref{th1}
does not hold in the case of positive characteristic.}
\end{rem}

\blue{\medskip}

The result in \blue{C}orollary~\ref{nonemebdding_char0} also holds in the
case of positive characteristic, but the argument is different.

\begin{theorem}
\label{nonemebdding_any_char}\label{th9} \label{inclusion}
Let $V$
be an algebraic variety with two algebraic operations which turn it
into an associative ring. Then any ring homomorphism
$\phi : \Z\la x,y\ra \to V$ has \blue{a} non-trivial kernel.
\end{theorem}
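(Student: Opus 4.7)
I aim to show that $V$ itself satisfies some polynomial identity (PI). Since the free ring $\Z\la x,y\ra$ satisfies none---a fact already used in the proof of Corollary~\ref{nonemebdding_char0}---this at once forces any ring homomorphism $\phi:\Z\la x,y\ra\to V$ to have a non-trivial kernel.

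\textbf{Reduction to the irreducible case.} By Lemma~\ref{lem}, $V_0$ is a two-sided ideal of $V$ with finite quotient $V/V_0$. Finite rings are PI (they embed in their regular representation, which is a matrix ring over a finite ring), and a ring is PI whenever it has a PI two-sided ideal with PI quotient---the composite identity $g(f(\vec x_1),\ldots,f(\vec x_n))$ works, where $f$ is an identity for $V/V_0$ and $g$ one for $V_0$. So it suffices to show that the irreducible ring $V_0$ satisfies a PI.

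\textbf{Tangent map and jet filtration.} Put $d=\dim V_0$, write $L_v(x)=vx$, and consider
\[
\Psi:V_0\to\End_K(T_\mathbf{0}V_0)\cong\Mat_d(K),\qquad v\mapsto dL_v|_\mathbf{0}.
\]
Distributivity $L_{v+w}=L_v+L_w$, combined with the fact that the additive group law on the algebraic group $V_0$ induces the vector-space addition on $T_\mathbf{0}V_0$, makes $\Psi$ additive; the chain rule together with $L_{vw}=L_v\circ L_w$ and $L_v(\mathbf 0)=\mathbf 0$ makes it multiplicative. Hence $V_0/\ker\Psi$ embeds in $\Mat_d(K)$ and satisfies the Amitsur--Levitzki identity. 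Define $J^{(n)}\subseteq V_0$ to be the closed subvariety of those $v$ for which $L_v^*$ carries the maximal ideal $\mathfrak m$ of the local ring $\mathcal O_{V_0,\mathbf 0}$ into $\mathfrak m^{n+1}$, so $J^{(0)}=V_0$ and $J^{(1)}=\ker\Psi$. Using the Hopf-algebra comultiplication of the additive group law of $V_0$ one checks that each $J^{(n)}$ is a two-sided ideal, and a direct computation with iterated pullbacks gives $J^{(n)}\cdot J^{(m)}\subseteq J^{((n+1)(m+1)-1)}$; in particular, for $n\ge 1$ the multiplication on $J^{(n)}/J^{(n+1)}$ is identically zero, so this quotient satisfies $xy=0$. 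The Zariski-Noetherian property forces the chain $J^{(n)}$ to stabilize at some $J^{(\infty)}$; for $v\in J^{(\infty)}$ Krull's intersection theorem gives $L_v^*(\mathfrak m)\subseteq\bigcap_n\mathfrak m^n=0$, which upgrades---by injectivity of the localisation $K[V_0]\hookrightarrow\mathcal O_{V_0,\mathbf 0}$ and irreducibility of $V_0$---to $L_v\equiv\mathbf 0$ on $V_0$, i.e.\ $v\cdot V_0=0$. Consequently $J^{(\infty)}$ also satisfies $xy=0$. Iterating the PI-extension result up this finite filtration yields that $V_0$ is PI, completing the proof.

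\textbf{Main obstacle.} The main technical work is in the jet filtration: establishing additive closure of each $J^{(n)}$ (which requires careful use of the Hopf-algebra comultiplication, since the pullback $v\mapsto L_v^*$ is \emph{not} additive in $v$ in the naive sense), proving the multiplicative estimate $J^{(n)}J^{(m)}\subseteq J^{((n+1)(m+1)-1)}$ by iterated composition of pullbacks, and identifying the terminal piece $J^{(\infty)}$ with the left annihilator of $V_0$ via Krull's theorem. The pay-off is that these are purely local computations at $\mathbf 0$, valid in arbitrary characteristic, so they sidestep the complex Lie-group structure theorem of Pontryagin that was essential in Lemma~\ref{linear}, giving the theorem uniformly in any characteristic.
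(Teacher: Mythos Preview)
Your argument is essentially correct, and the strategy---proving that any ring-variety $V$ is PI---is genuinely different from the paper's. The paper never establishes that $V$ is PI in positive characteristic; instead it argues by a direct dimension count. Concretely, the paper picks $r_i=xy^{i+1}$ so that the standard polynomial $s_l(r_1,\dots,r_l)\ne 0$ for all $l$, lets $N_l$ be the Zariski closure of $\phi(\Z r_1+\dots+\Z r_l)$, observes $s_{l+1}$ vanishes on $N_l$ by multilinearity and antisymmetry, and then shows that the translates $i\cdot\phi(r_l)+N_{l-1}$ are pairwise disjoint (using non-vanishing of $s_l$ and injectivity of $\phi$), forcing $\dim N_l>\dim N_{l-1}$; this produces a chain of length $\dim V+1$ inside $V$, a contradiction.

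The trade-offs are clear. The paper's proof is considerably more elementary: it uses only the most basic facts about varieties (dimension, irreducible components, Noetherianity) and a single multilinear identity, with no tangent spaces, local rings, Hopf-algebra comultiplication, or Krull intersection theorem. Your route buys a genuinely stronger structural conclusion---$V$ itself is PI, not merely ``$\Z\la x,y\ra$ does not embed''---and the jet filtration $J^{(n)}$ is a natural characteristic-free substitute for the Pontryagin/Lie-group argument of Lemma~\ref{linear}. Your technical claims (additive closure of $J^{(n)}$ via the comultiplication, the bound $J^{(n)}J^{(m)}\subseteq J^{((n+1)(m+1)-1)}$ from $L_{vw}^*=L_w^*\circ L_v^*$, and identification of $J^{(\infty)}$ with a left annihilator via Krull) all check out once one writes them down, so the sketch can be completed; but the paper's argument gets to the stated theorem faster and with less machinery.
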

\begin{proof}
Let $k$ be the dimension of $V$ and let assume that the map $\phi$ is injective.
Let $s_l$ denote the symmetric function on $l$ arguments, i.e.,
$$
s_l(x_1,\dots,x_l) = \sum_{\sigma \in S_n} (-1)^\sigma \prod x_{\sigma(i)}
$$

Pick elements $r_1,r_2,\dots,r_{k+1}$ such that $s_l(r_1,\dots,r_l)
$ is not $0$
in the ring $R=\Z\la x,y\ra$ for any $l \leq k+1$
(for example we can take $r_i = x y^{i+1}$). Let $M_l$ denote
the $\Z$ span of the elements $r_1,\dots, r_l$ in the ring $R$ and
let $N_l$ be the Zariski closure of $\phi(M_l)$ in $V$.
\begin{lem}
The symmetric function $s_{l+1}$ is zero when evaluated on any $l+1$
elements in $M_l$.
\end{lem}
\begin{proof}
The polynomial $s_{l+1}$ is linear in every variable and
anti-symmetric, and $M_l$ is spanned by less that $l+1$ elements.
\end{proof}
This immediately implies\blue{:}
\begin{cor}
The symmetric function $s_{l+1}$ is zero when evaluated on any $l+1$ elements in $N_l$.
\end{cor}
\begin{lem}
For any $l$ we have that $\dim N_{l} > \dim N_{l-1}$.
\end{lem}
\begin{proof}
Let $N_{l,i}$ denote the set $i. \phi(r_l) + N_{l-1}$ for a positive
integer $i$ (here $i.r$ denotes the sum $r+r+ \dots +r$). Using the
fact that the operation $+$ is an algebraic function, we can
conclude that this is a sub\blue{-}variety of $N_{l}$ and
$\dim N_{l,i} = \dim N_{l-1}$ because the algebraic map $v \to i. \phi(r_l)  + v$ is
a bijection from $V$ to $V$. Let us show that these subvarieties are
disjoint: assume that $i_1.\phi(r_l) + v_1 = i_2.\phi(r_l) + v_2$
for some different integers $i_1$ and $i_2$ and some points $v_1,v_2
\in N_{l-1}$. Using the linearity of the symmetric function $s_{l}$
we have
$$
(i_2-i_1).s_{l}\left(\phi(r_1),\dots,\phi(r_{l-1}), \phi(r_l)\right)=
s_{l}\left(\phi(r_1),\dots,\phi(r_{l-1}), v_1 -v_2\right)=
$$
$$
=
s_{l}\left(\phi(r_1),\dots,\phi(r_{l-1}), v_1\right) -
s_{l}\left(\phi(r_1),\dots,\phi(r_{l-1}), v_2\right)=
\mathbf{0}
$$
because $s_l$ is trivial on $N_{l-1}$.
However this contradicts the choice of the elements $r_i$ and the injectivity of $\phi$ because
$$
(i_2-i_1).s_{l}\left(\phi(r_1),\dots,\phi(r_{l-1}), \phi(r_l)\right)=
\phi\left( (i_2-i_1).s_{l}(r_1,\dots,t_l) \right) \not =
\mathbf{0}.
$$
Thus $N_l$ contains infinitely many subvarieties of dimension equal to the one of $N_{l+1}$,
which is only possible if  $\dim N_{l} > \dim N_{l-1}$.
\end{proof}
The above lemma yields:
\begin{cor}
The dimension of $N_l$ is greater than or equal to $l$.
\end{cor}

This is a contradiction because by construction $N_{k+1} \subset V$
and $\dim V =k < k+1 \leq\dim N_{k+1}$, which completes the proof of
Theorem~\ref{inclusion}.
\end{proof}

\begin{rem} \label{rem15}
\rm{
It is not clear if it is possible to embed $\F_p\la x,y \ra$ into an
algebraic variety with a ring structure over a \blue{field} of positive characteristic.
(The above argument only works if the ``base ring'' contains
$\Z$.)
}
\end{rem}

\blue{
Before completing the proof of Theorem~\ref{th1}, we need to prove
a lemma about the Steinberg groups $\St_n(R)$ where $R$ is an associative unitary ring.
This group has (formal) generators $x_{ij}(r)$ for $1\leq i\not=j
\leq n$ and $r\in R$, which satisfy the following commutator
relations:
$$
\begin{array}{ll}
x_{ij}(r)x_{ij}(s) = x_{ij}(r+s)  & {} \\
{}[x_{ij}(r),x_{pq}(s)] = 1 & \mbox{ if } i\not=q, j\not=p \\
{}[x_{ij}(r),x_{jk}(s)] =x_{ik}(s) & \mbox{ if }  i \not=k \\
\end{array}
$$
There is a surjection from $\St_n(R)$ onto $\EL_n(R)$ mapping
$x_{ij}(r)$ to $\Id + r e_{ij}$. The kernel of this surjection is
denoted by $K_{2,n}(R)$.
}

\blue{The following lemma is fairly standard and probably well known.
We are grateful to Nikolai Vavilov for correcting the original proof of that lemma.}
\blue{
\begin{lem}\label{St}
If $R$ is a finite ring, then the Steinberg group $\St_n(R)$ is finite for any $n\ge 3$.
\end{lem}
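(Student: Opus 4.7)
The plan is to exhibit $\St_n(R)$ as a perfect central extension of a finite group and then to invoke Schur multiplier theory.

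First I would observe that, since $R$ is finite, $\GL_n(R)$ has at most $|R|^{n^2}$ elements, so its subgroup $\EL_n(R)$ is finite. Thus the natural surjection $\St_n(R)\twoheadrightarrow \EL_n(R)$ with kernel $K_{2,n}(R)$ has finite image, and it suffices to show that this kernel is finite.

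Next I would check that $\St_n(R)$ (and, by the same identity, $\EL_n(R)$) is perfect whenever $n\ge 3$: for each generator $x_{ij}(r)$ one may pick $k\notin\{i,j\}$ (possible since $n\ge 3$), and the Steinberg commutator relation gives $x_{ij}(r)=[x_{ik}(r),x_{kj}(1)]$, exhibiting every generator, and therefore every element, as a commutator.

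The main obstacle, as I see it, is to verify that $K_{2,n}(R)$ is central in $\St_n(R)$ for $n\ge 3$. This is the classical theorem of Steinberg (see, for instance, Milnor's \emph{Introduction to algebraic K-theory}); its proof is a somewhat delicate manipulation of the defining commutator relations that uses an auxiliary third index $k\notin\{i,j\}$ to conjugate elements of the kernel past generators of the form $x_{ij}(r)$. I would either cite this result or reproduce the short standard argument.

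Finally, once centrality is in hand, $\St_n(R)$ is a perfect central extension of the finite perfect group $\EL_n(R)$. By the universal property of the Schur cover, every perfect central extension of a group $G$ is a quotient of the universal central extension $\widetilde{G}$, whose kernel is the Schur multiplier $H_2(G,\Z)$. When $G$ is finite, $H_2(G,\Z)$ is finite and hence $\widetilde{G}$ is finite. Applying this with $G=\EL_n(R)$ exhibits $\St_n(R)$ as a quotient of a finite group, so $\St_n(R)$ itself is finite.
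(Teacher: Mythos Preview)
Your approach is essentially the paper's: both exhibit $\St_n(R)$ as a perfect central extension of the finite group $\EL_n(R)$ and conclude finiteness. The paper cites Vaserstein and Milnor to get that $K_{2,n}(R)$ is finite and central; your Schur-multiplier argument is a clean, self-contained way to replace the finiteness part of that citation.

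One point needs sharpening. You describe centrality of $K_{2,n}(R)$ in $\St_n(R)$ for all $n\ge 3$ as ``the classical theorem of Steinberg'' from Milnor's book, provable by a short third-index manipulation. Milnor's argument is for the \emph{stable} Steinberg group $\St(R)$; the same manipulation does not establish unstable centrality at $n=3$ for arbitrary rings, and indeed this is not a theorem in that generality. What makes $n=3$ work here is that a finite ring has stable range~$1$, and then the stability results of Vaserstein (precisely the reference the paper invokes) give that $K_{2,n}(R)$ is central already for $n\ge 3$. Since the lemma is applied in the paper exactly at $n=3$, this is not a cosmetic issue: you should replace the appeal to ``Steinberg/Milnor'' by the stable-range argument. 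With that correction, your proof is complete.
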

}
\blue{
\begin{proof}
Using results of Vaserstein~\cite{Va} and Milnor~\cite{Mi}
we deduce that
in the case of a finite ring $R$,
the groups $K_{2,n}(R)$ do not depend on $n\geq 3$ and are finite and central in $\St_n(R)$
(because the stable range of a finite ring is $1$).
Therefore, the Steinberg group
$\St_n(R)$, $n\ge 3$, is a \blue{perfect} extension of a finite group $K_{2,n}(R)$ by the finite group $\EL_n(R)$ and 
thus
is also finite.
\end{proof}
}

\begin{proof}[Proof of Theorem~\ref{th1}.]
(a) Suppose that $G=\EL_3(R)$ is
linear over a field $\C$. Then by Theorem~\ref{th2} $R$ embeds into
a ring that is a variety over $\C$. By Lemma~\ref{linear}, then $R$
has a finite index ideal that is linear over $\C$.

Suppose now that $R$ has a finite index ideal $I$ that is linear
over $\C$. Consider the {\em congruence subgroup} $G_I$ of $G$
corresponding to $I$, that is the subgroup generated by all
$x_{ij}(r)$, $r\in I$.
\blue{The subgroup $G_I$ has a finite index
in $G$, because the quotient $G/G_I$ is a homomorphic image of the
Steinberg group $\St_3(R/I)$ which is finite by Lemma~\ref{St}.}
Also, $G_I$ is linear over $\C$, therefore $G$ is linear over $\C$
(consider the representation induced by the faithful representation
of $G_I$).

(b) Suppose that $G=\EL_{\blue{3}}(\Z\la x,y \ra )$ is linear over any field $K$.
Again by Theorem~\ref{th2}, then $\Z\la x,y\ra $ embeds into a ring that
is a finite dimensional algebraic variety over $K$. By Theorem~\ref{th9},
that is impossible, a contradiction.
\end{proof}

The following theorem can be proved in the same manner as
Theorem~\ref{th1}.

\begin{theorem}\label{st}
(a) If the group $\St_3(R)$ is linear over
$\C$, then $R$ has a finite index ideal that is linear over $\C$.

(b) The group $\St_3(\Z\la x,y \ra)$ is not linear over any field $K$.
\end{theorem}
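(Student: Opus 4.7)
The plan is to mirror the proof of Theorem~\ref{th1} with the Steinberg group in place of the elementary group, observing that the construction of the ring structure on $V$ in Theorem~\ref{th2} used only the Steinberg relations among the generators $x_{ij}(r)$, and hence carries over verbatim from $\EL_3(R)$ to $\St_3(R)$.

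For part (a), I would start with a faithful linear representation $\pi:\St_3(R)\to\GL_n(\C)$. Setting $U=\{\pi(x_{13}(r)):r\in R\}$ and letting $V$ be its Zariski closure, I would define the addition on $V$ using the Steinberg relation $x_{13}(r)x_{13}(s)=x_{13}(r+s)$. For the multiplication I need elements $w_{12},w_{23}\in\St_3(R)$ satisfying $w_{12}x_{13}(r)w_{12}^{-1}=x_{23}(r)$ and $w_{23}x_{13}(r)w_{23}^{-1}=x_{12}(r)$; the standard formulas $w_{ij}=x_{ij}(1)x_{ji}(-1)x_{ij}(1)$ produce such elements, and the required conjugation identities follow from the Steinberg relations alone. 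Combined with the commutator relation $[x_{12}(r),x_{23}(s)]=x_{13}(rs)$, this gives $V$ the structure of an associative unital ring together with a ring homomorphism $\rho:R\to V$, $\rho(r)=\pi(x_{13}(r))$. The map $\rho$ is injective because the subgroup of $\St_3(R)$ generated by $\{x_{13}(r):r\in R\}$ is isomorphic to the additive group of $R$ (as can be seen by projecting onto $\EL_3(R)$), and $\pi$ is faithful. Lemma~\ref{linear} then provides a finite index ideal $V_0\subset V$ which is a finite dimensional algebra over $\C$, and I would take $I=\rho^{-1}(V_0)$: this is an ideal of finite index in $R$, and since $\rho$ is injective, it embeds into $V_0$ and is therefore linear over $\C$.

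For part (b), I would assume that $\St_3(\Z\la x,y\ra)$ admits a faithful representation over some field $K$, passing to the algebraic closure if necessary. The construction from part (a) then produces an injective ring homomorphism from $\Z\la x,y\ra$ into an algebraic variety $V$ over $K$ carrying a compatible associative ring structure, directly contradicting Theorem~\ref{th9}.

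The main obstacle is checking that the elements $w_{12},w_{23}\in\St_3(R)$ defined above actually satisfy the required conjugation identities purely from the Steinberg relations (without appealing to explicit matrix computations in $\EL_3(R)$); this is a routine but not entirely trivial verification. Once this point is in place, every step in the proof of Theorem~\ref{th1} transfers to the Steinberg setting without further modification, and no analogue of Lemma~\ref{St} is needed since part (a) of Theorem~\ref{st} asserts only the forward implication.
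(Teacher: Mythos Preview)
Your proposal is correct and follows precisely the approach the paper intends: the paper's own ``proof'' of Theorem~\ref{st} is the single sentence that it ``can be proved in the same manner as Theorem~\ref{th1}'', and your elaboration spells this out faithfully. Your observations that the construction in Theorem~\ref{th2} uses only Steinberg relations, that injectivity of $\rho$ follows by projecting to $\EL_3(R)$, and that Lemma~\ref{St} is unnecessary here (since only the forward implication is claimed) are all exactly the points one needs to check; the conjugation identities for $w_{ij}=x_{ij}(1)x_{ji}(-1)x_{ij}(1)$ in $\St_3(R)$ are indeed standard (see e.g.\ Milnor's book cited in the paper).
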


\bigskip

\begin{minipage}[t]{2.5  in}
Martin Kassabov\\
Department of Mathematics, \\ Cornell University, \\
Ithaca, NY 14853-4201 USA\\
kassabov@math.cornell.edu
\end{minipage}
\begin{minipage}[t]{3 in}
\noindent Mark V. Sapir\\ Department of Mathematics\\
Vanderbilt University\\
Nashville, TN 37240, USA\\
m.sapir@vanderbilt.edu
\end{minipage}

\begin{thebibliography}{JWW}
\label{bibbb}


\bibitem[Be]{Berg} \blue{George M. Bergman,
Some examples in PI ring theory.
Israel J. Math. 18 (1974), 257--277. }

\bibitem[BT]{BT} \blue{Armand Borel, Jacques Tits, Homomorphismes
``abstraits'' de groupes algébriques simples. Ann. of Math. (2)  97
(1973), 499--571.}

\bibitem[C]{Ch}\blue{Yu Chen, Homomorphisms from linear groups over
division rings to algebraic groups. Group theory, Beijing 1984,
231--265, Lecture Notes in Math., 1185, Springer, Berlin, 1986. }

\bibitem[Ch]{Chu} V. A. Churkin, V. A. The representation of groups over rational
function fields. Algebra i Logika 7 1968 no. 4, 120--123.

\bibitem[Co]{Co}\blue{ P. M. Cohn, Free rings and their relations.
Second edition. London Mathematical Society Monographs, 19. Academic
Press, Inc. [Harcourt Brace Jovanovich, Publishers], London, 1985.}

\bibitem[HJW]{HJW} \blue{Alexander J. Hahn, Donald G. James, Boris Weisfeiler,
Homomorphisms of algebraic and classical groups: a survey. Quadratic and Hermitian forms (Hamilton, Ont., 1983), 249--296, CMS Conf. Proc., 4, Amer. Math. Soc., Providence, RI, 1984.}

\bibitem[H]{He}\blue{
I. N. Herstein,
Noncommutative rings.
The Carus Mathematical Monographs, No. 15,
John Wiley \& Sons, Inc., New York 1968.
}


\bibitem[JWW]{JWW} \blue{D. James, W.  Waterhouse, and B. Weisfeiler, Abstract homomorphisms of algebraic groups: problems and
bibliography,Communications in Algebra, 9:1 (1981), 95 — 114.}

\bibitem[Ma]{Mal} A. I. Mal'cev,  The elementary properties of linear groups.
1961  Certain Problems in Mathematics and Mechanics (In
Honor of M. A. Lavrent'ev)  pp. 110--132 Izdat. Sibirsk.
Otdel. Akad. Nauk SSSR, Novosibirsk.

\bibitem[Mi]{Mi}
\blue{J. Milnor, Introduction to algebraic $K$-theory.
Annals of Mathematics Studies, No. 72. Princeton University Press, Princeton, N.J., 1971.}

\bibitem[Mo]{Mos} G. D. Mostow,  Strong rigidity of locally symmetric spaces. Annals
of Mathematics Studies, No. 78. Princeton University Press,
Princeton, N.J.; University of Tokyo Press, Tokyo, 1973.

\bibitem[OM]{OM} O. T. O'Meara, Lectures on linear groups. Expository Lectures
from the CBMS Regional Conference held at Arizona State University,
Tempe, Ariz., March 26--30, 1973. Conference Board of the
Mathematical Sciences Regional Conference Series in Mathematics, No.
22. American Mathematical Society, Providence, R.I., 1974.

\bibitem[Po]{Pontr} L. S. Pontryagin, Nepreryvnye gruppy. (Russian)
[Continuous groups] Fourth edition. ``Nauka'', Moscow, 1984.

\bibitem[Ro]{Rowen} L. H. Rowen, Polynomial identities in ring theory. Pure and
Applied Mathematics, 84. Academic Press, Inc. [Harcourt Brace
Jovanovich, Publishers], New York-London, 1980.

\bibitem[Va]{Va}
\blue{
L. N. Vaserstein,
Bass's first stable range condition.
Proceedings of the Luminy conference on algebraic $K$-theory (Luminy, 1983).
J. Pure Appl. Algebra 34 (1984), no. 2-3, 319--330.
}

\end{thebibliography}
\end{document}